\documentclass[12pt]{amsart}
\usepackage{fullpage}
\usepackage{latexsym}
\usepackage{amssymb}
\usepackage[all]{xy}

\pagestyle{headings}
\setlength{\headheight}{6.5pt}
\setlength{\headsep}{0.5cm}

\newtheorem{thm}{Theorem}[section]

\newtheorem{cor}[thm]{Corollary}

\newtheorem{lem}[thm]{Lemma}
\newtheorem{que}[thm]{Question}

\theoremstyle{definition}
\newtheorem{rem}[thm]{Remark}
\newtheorem{defn}[thm]{Definition}
\newtheorem{ex}[thm]{Example}

\usepackage{hyperref}

\newcommand{\Q}{\mathbb{Q}}
\newcommand{\R}{\mathbb{R}}
\newcommand{\Z}{\mathbb{Z}}

\newcommand{\SO}{\mathrm{\SO}}

\newcommand\rank{\operatorname{rank}}

\title[Topological complexity, asphericity and connected sums]
{Topological complexity, asphericity and \\connected sums}

\author{Christoforos Neofytidis}
\address{Department of Mathematics, Ohio State University, Columbus, OH 43210, USA}
\email{neofytidis.1@osu.edu}
\date{\today}
\subjclass[2010]{}
\keywords{Topological complexity, (co-)homology realisation, aspherical manifold, connected sum, aspherical class, atoroidal class, $\ell^1$-semi-norm, $4$-manifold}

\begin{document}

\begin{abstract}
We show that if a closed oriented $n$-manifold $M$ has a non-trivial cohomology class of even degree $k$, whose all pullbacks to products of type $S^1\times N$ vanish, then the topological complexity $\mathrm{TC}(M)$ is at least $6$, if $n$ is odd, and at least $7$ or $9$, if $n$ is even. These bounds extend and improve a result of Mescher and apply for instance to negatively curved manifolds and to connected sums with at least one such summand. In fact, better bounds are obtained due to the non-vanishing of the Gromov norm. As a consequence, in dimension four, we completely determine the topological complexity of these connected sums, namely we show that it is equal to its maximum value nine. Furthermore, we discuss realisation of degree two homology classes by tori, and show how to construct  non-realisable classes out of realisable classes in connected sums. The examples of this paper will quite often be aspherical manifolds whose fundamental groups have trivial center and  connected sums. We thus discuss the possible relation between the maximum topological complexity $2n+1$ and the triviality of the center for aspherical $n$-manifolds and their connected sums.
 \end{abstract}

\maketitle

\section{Introduction}

 The Lusternik-Schnirelmann (LS) category of a topological space $X$, denoted by $\mathrm{cat}(X)$,  is the smallest integer $n$ such that there exist an open covering $\{U_1,...,U_n\}$ of $X$, where each $U_i$ is contractible in $X$. A motion planning algorithm over an open subset $U_i\subset X\times X$ is a continuous map 
\[
s_i\colon U_i\to X^{[0,1]}\colon (x,y)\mapsto s, \ s(0) = x, s(1) = y, \ \text{where} \ (x,y)\in U_i.
\]
The topological complexity $\mathrm{TC}(X)$ of $X$, introduced by Farber~\cite{Fa03}, is the smallest integer $n$ such that there exist a covering $\{U_1,...,U_n\}$ of $X\times X$ with $n$ open sets over which there are motion planning algorithms\footnote{Throughout this paper we use the {\em unreduced} versions of the LS-category and the topological complexity as in~\cite{Fa03}, that is, their value is $+1$ the value of the reduced version which is used in various references of this paper.}. (Note that $\pi_X\circ s_i=id_{U_i}$, for all $i=1,...,n$, where  $\pi_X$ denotes the free path fibration.) Both $\mathrm{cat}(X)$ and $\mathrm{TC}(X)$ can be defined using the Schwarz genus, which makes their relationship transparent. An initial observation (see~\cite[Theorem 5]{Fa03}) is that the LS-category provides the following bounds for the topological complexity
\begin{equation}\label{eq:LS-TC}
\mathrm{cat}(X)\leq\mathrm{TC}(X)\leq\mathrm{cat}(X\times X)\leq 2\mathrm{cat}(X)-1.
\end{equation}
In particular, since $\mathrm{cat}(X)\leq\dim(X)+1$ (\cite[Theorem 1.7]{CLOT03}), we have the following (see~\cite[Theorem 4]{Fa03})
\begin{equation}\label{eq:dim-TC}
\mathrm{TC}(X)\leq2\dim(X)+1.
\end{equation}

An efficient way to obtain lower bounds for the topological complexity is through topological complexity weights (see Definition \ref{d:tcwgt}). Mescher proved that if a closed oriented manifold of dimension at least three contains an {\em atoroidal} cohomology class, that is, a cohomology class of degree two whose all pullbacks to a $2$-torus $T^2$ vanish, then $\mathrm{TC}(M)\geq6$; see~\cite[Theorem 6.1]{Me21}.  Our first goal is to extend and improve Mescher's result as follows:

\begin{thm}\label{t:main}
Let $M$ be a closed oriented manifold of dimension $n\geq3$. Suppose that there is a non-trivial cohomology class $u_a\in H^k(M;\Q)$, where $k$ is even, $1<k<n$,  whose all pullbacks to products $S^1\times N$ vanish for any closed oriented $(k-1)$-manifold $N$. Then
\begin{itemize} 
\item[(a)] $\mathrm{TC}(M)\geq6$, if $n$ is odd;
\item[(b)] $\mathrm{TC}(M)\geq7$, if $n$ is even; 
\item[(c)] $\mathrm{TC}(M)\geq9$, if $n=2k$ and $u_a^2\neq0$.
\end{itemize}
\end{thm}

As usual, we denote by $\omega_X\in H^n(X)$ the cohomological fundamental class of a closed oriented manifold $X$.

Theorem \ref{t:main} applies for instance to negatively curved manifolds and to connected sums containing at least one such a summand. For example, we have the following stronger estimate than Theorem \ref{t:main}(b):

\begin{cor}\label{c:nc&cs}
Let $M$ be a closed oriented negatively curved manifold of dimension $n\geq3$ with $H_k(M;\Q)\neq0$ for some even $k$, $1<k<n$. Then $TC(M\#L)\geq6$ if $n$ is odd, and $\mathrm{TC}(M\#L)\geq9$ if $n$ is even, where $L$ is any closed oriented $n$-manifold.
\end{cor}

In fact, the proof of the above corollary will give us as well a stronger estimate than Theorem \ref{t:main}(a) in almost all cases, namely, $\mathrm{TC}(M\#L)\geq7$ if $n$ is odd and $1<k<n-1$. 

Corollary \ref{c:nc&cs} takes a very strong form in dimension four, since then we conclude that the topological complexity attains its maximum value: 

\begin{cor}\label{c:betti}
Let $M$ be a closed oriented negatively curved $4$-manifold with non-zero second Betti number\footnote{The $i$th-Betti number of a space $X$ is defined by $b_i(X)=\dim H_i(X;\Q)$.}. Then $\mathrm{TC}(M\#L)=9$ for any closed oriented $4$-manifold $L$.
\end{cor}

Of course, similar results with even better bounds can be obtained in higher dimensions as long as there are enough non-zero cohomology classes; we will discuss this and its potential generalisations  in Section \ref{s:final}. 

The cohomology classes $u_a\in H^k(M;\Q)$ of Theorem \ref{t:main} have attracted considerable interest when $N=S^1$, i.e. when $u_a$ are atoirodal, as already mentioned in relation to Mescher's result~\cite[Theorem 6.1]{Me21}. 
Clearly, an atoroidal class $u\in H^2(M;\Q)$ gives rise to a homology class $v\in H_2(M;\Q)$ ($u$'s Kronecker dual) which is {\em not realisable} by tori, that is, there is no continuous map $f\colon T^2\to M$ such that $H_2(f)([T^2])=\alpha\cdot v$, for some $\alpha\neq0$. The (non)-realisation of degree $k$ homology classes by closed oriented connected $k$-manifolds is a classical problem in Topology (known as Steenrod’s problem; see~\cite[Problem 25]{Ei49}), and outstanding contributions date back to the work of Thom~\cite{Th54}.  Our second goal in this paper is to construct new homology classes that are not realisable by connected tori:

\begin{thm}\label{t:torus}
Let $M_1,M_2$ be closed oriented connected manifolds of dimension $n\geq3$. Suppose that there exist non-trivial homology classes $v_j\in H_2(M_j;\Q)$ which are not in the image of the Hurewicz homomorphism. Then
 \[
 v:=(v_1,v_2)\in H_2(M_1\# M_2;\Q)=H_2(M_1;\Q)\oplus H_2(M_2;\Q)
 \]
 is not realisable by a connected torus.
\end{thm}

Note that the $v_j$ in Theorem \ref{t:torus} are called homologically {\em aspherical}; see~\cite{BK08} and also~\cite{RO99},~\cite{GM20}. One of our goals in a previous draft was to use the class $v$ from Theorem \ref{t:torus} in combination with Theorem \ref{t:main} to obtain new bounds for the topological complexity of connected sums. However, as pointed out to me by D. Kotschick and L. Sch\"onlinner this cannot be done, for instance, due to the fact that there is no connectivity requirement for $S^1\times N$ in Theorem \ref{t:main} (cf. Theorem \ref{t:weightsproperties3}) and clearly $v$ in Theorem \ref{t:torus} will be realised by several copies of $T^2$ once each of the $v_j$ is realised by $T^2$; see also Remark \ref{r:nonatoroidal} for an example in the connected setting.

\subsection*{Outline of the paper}
In Section \ref{s:weight} we will recall certain bounds of the sectional category and topological complexity weights. In Section \ref{s:main} we will prove Theorem \ref{t:main}, as well as Corollaries \ref{c:nc&cs} and \ref{c:betti}. In Section \ref{s:torus} we will prove Theorem \ref{t:torus} and explain through a precise example (the connected sum $T^4\# T^4$) that the Kronecker dual of a class $v$ obtained from Theorem \ref{t:torus} need not be atoroidal (Remark \ref{r:nonatoroidal}). Finally, in Section \ref{s:final} we will discuss further applications and open questions motivated by this work. More precisely, we will investigate how the maximum value of the topological complexity 
 is related to the triviality of the center of the fundamental group of aspherical manifolds and to their connected sums.

\subsection*{Notation and terminology}
All (co)homology groups in this paper are taken with {\em rational} coefficients and for brevity we will write $H_*(X)$ instead of $H_*(X;\Q)$. For the same reason, by ``manifold" we will always mean a {\em closed} and {\em oriented} manifold.

\subsection*{Acknowledgments}
I would like to thank Jean-Fran\c cois Lafont and Stephan Mescher for useful discussions, as well as Mark Grant and Dieter Kotschick for suggesting to use aspherical homology classes in Theorem \ref{t:torus} rather than the assumptions of a previous draft. I am particularly thankful to two anonymous referees, whose comments on the proof of Theorem \ref{t:main} yielded a stronger statement in the present version, as well as to Dieter Kotschick and Lukas Sch\"onlinner for pointing out that the classes of  Theorem \ref{t:torus} cannot be applied to obtain new bounds for the topological complexity of connected sums. The hospitality of MPIM Bonn and the University of Halle-Wittenberg are thankfully acknowledged.

\section{Weight bounds}\label{s:weight}

In this section we give some preliminaries and record some results that we will need about sectional category and topological complexity weights.

\subsection{LS-category and topological complexity as Schwarz genus} 
The {\em Schwarz genus} or {\em sectional category} of a (Hurewicz) fibration $p\colon E\to B$, denoted by $\mathfrak{genus}(p)$ is the smallest integer $n$ for which the base $B$ can be covered by $n$ open sets $U_1,...,U_n$, such that for each $i=1,...,n$, there is a continuous map $s_i: U_i\to E$ satisfying $p\circ s_i = id_{U_i}$.

The Schwarz genus can be used to define  both LS-category and topological complexity: Let $X$ be a topological space with base point $x_0\in X$. Consider the Serre path fibration 
\[
p_X\colon P_0X=\{\gamma\colon[0,1]\to X \ | \ \gamma(0)=x_0\}\to X, \ \gamma\mapsto\gamma(1).
\]
The Schwarz genus of $p_X$ is the {\em LS-category} of $X$,
\[
\mathrm{cat}(X) = \mathfrak{genus}(p_X).
\]
Furthermore, we consider the free path fibration
\[
\pi_X\colon X^{[0,1]}\to X\times X, \ \gamma\mapsto(\gamma(0),\gamma(1)),
\]
where $X^{[0,1]}$ is equipped with the compact-open topology. The Schwarz genus of $\pi_X$ is the {\em topological complexity} of $X$,
\[
\mathrm{TC}(X)= \mathfrak{genus}(\pi_X).
\]

\subsection{Sectional category and topological complexity weights}

Given a (Hurewicz) fibration $p\colon E\to B$ and a continuous map $f\colon Y\to B$, let $f^*p\colon f^*E\to Y$ denote the pullback fibration.

\begin{defn}\label{d:sewgt}
 The {\em sectional category weight with respect to} $p$ of a non-trivial cohomology class $u\in H^*(B)$ is defined by
\[
\mathrm{wgt}_p(u)=\sup\{k \ | \ f^*(u)=0 \ \text{for all maps} \ f\colon Y\to B \ \text{with} \ \mathfrak{genus}(f^*p)\leq k\},
\]
where $Y$ is any topological space.
\end{defn}

We will need the following results:

\begin{thm}\cite{Ru99,FG07}\label{t:weightsproperties1}
If $p\colon E\to B$ is a fibration and $u\in H^*(B)$ a non-zero cohomology class, then the following hold:
\begin{itemize}
\item[(a)] $\mathfrak{genus}(p)\geq\mathrm{wgt}_p(u) + 1$;
\item[(b)] $\mathrm{wgt}_p(u)\leq |u|$, where $|u|$ denotes the degree of $u$.
\end{itemize}
\end{thm}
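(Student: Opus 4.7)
My plan for both parts is to apply the definition of $\mathrm{wgt}_p(u)$ directly to well-chosen maps $f\colon Y\to B$, exploiting the fact that the set whose supremum defines the weight is downward-closed in $k$.

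Part (a) will be essentially immediate: I apply Definition \ref{d:sewgt} to the identity $\id_B\colon B\to B$. Since $\id_B^*p=p$ and $\id_B^*u=u\neq 0$, the identity witnesses that the defining condition fails at $k=\mathfrak{genus}(p)$. Downward-closedness of the set then forces its supremum to be strictly less than $\mathfrak{genus}(p)$, which rearranges to $\mathfrak{genus}(p)\geq\mathrm{wgt}_p(u)+1$.

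For part (b), set $q=|u|$; the goal is to exhibit a single map $f\colon Y\to B$ with $f^*(u)\neq 0$ and $\mathfrak{genus}(f^*p)\leq q+1$, which will then yield $\mathrm{wgt}_p(u)\leq q$ by the same supremum argument used in (a). After replacing $B$ by a CW approximation and pulling $p$ back (both of which preserve the relevant data up to weak equivalence), I take $Y$ to be the $q$-skeleton $B^{(q)}$ and $f\colon Y\hookrightarrow B$ the inclusion. The long exact sequence of the pair $(B,Y)$ in rational cohomology forces the restriction $f^*\colon H^q(B;\Q)\to H^q(Y;\Q)$ to be injective, since $H^q(B,Y;\Q)=0$ because all relative cells have dimension at least $q+1$; hence $f^*u\neq 0$.

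To bound $\mathfrak{genus}(f^*p)$, I invoke the general inequality that the sectional category of any Hurewicz fibration is bounded above by the LS-category of its base: over any $U\subset Y$ contractible inside $Y$, a null-homotopy of the inclusion lifts through $f^*p$ via the homotopy lifting property (starting from any point in a single fiber), producing a section over $U$. Combining this with the bound $\mathrm{cat}(Y)\leq\dim(Y)+1=q+1$ cited from \cite{CLOT03} in the introduction gives $\mathfrak{genus}(f^*p)\leq q+1$, which completes the argument. The main point requiring care is the CW approximation step, where one must transfer the fibration and the cohomology class across a weak equivalence; this is standard, and the rest of the proof is a direct application of definitions together with the category–dimension bound.
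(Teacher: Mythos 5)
The paper does not prove this theorem; it cites it directly from \cite{Ru99,FG07}, so there is no in-text argument to compare against. Your proof is correct and follows the standard lines of the cited references. For (a), the observation that $\id_B$ pulls $p$ back to itself, together with downward-closedness of the set in Definition~\ref{d:sewgt}, is exactly the intended immediate argument. For (b), the two key facts you use are both right: the inclusion of the $q$-skeleton is injective on $H^q(\,\cdot\,;\Q)$ because $H^q(B,B^{(q)};\Q)=0$, and $\mathfrak{genus}(f^*p)\leq\mathrm{cat}(Y)\leq\dim Y+1=q+1$, where the first inequality is Schwarz's classical bound obtained by lifting a null-homotopy of $U\hookrightarrow Y$ through the fibration (which, as you should note, requires non-empty fibers over the relevant components—true for the path fibrations that actually occur in this paper).

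Two small tidying remarks. First, rather than ``replacing $B$ by a CW approximation and pulling $p$ back,'' it is cleaner to leave $B$ and $p$ alone: take a CW approximation $\phi\colon B'\to B$, set $Y=(B')^{(q)}$, and use $f=\phi\circ\iota\colon Y\to B$ as the test map in Definition~\ref{d:sewgt}. Then $\phi^*u\neq 0$ because singular rational cohomology is a weak-homotopy invariant, $\iota^*$ is injective in degree $q$, and the genus bound applies to $f^*p$ over the CW complex $Y$. This avoids any need to ``transfer the fibration across a weak equivalence.'' Second, you implicitly use that the fibration has non-empty total space (otherwise $\mathrm{wgt}_p$ degenerates); it is worth making that standing assumption explicit, as Rudyak and Farber--Grant do.
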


\begin{thm}\cite[Prop. 32]{FG07}\label{t:weightsproperties2}
Let $p\colon E\to B$ be a fibration. Suppose that there exist cohomology classes $u_1,...,u_k\in H^*(B)$ with non-trivial cup product $u_1\cup\cdots\cup u_k\neq0$. Then
\[
\mathrm{wgt}_p(u_1\cup\cdots\cup u_k)\geq\sum_{i=1}^k\mathrm{wgt}_p(u_i).
\]
\end{thm}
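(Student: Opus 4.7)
The plan is to verify the inequality directly from Definition~\ref{d:sewgt}: given an arbitrary continuous map $f\colon Y\to B$ satisfying $\mathfrak{genus}(f^*p)\leq N:=\sum_{j=1}^k\mathrm{wgt}_p(u_j)$, I would show $f^*(u_1\cup\cdots\cup u_k)=0$, which by definition yields $\mathrm{wgt}_p(u_1\cup\cdots\cup u_k)\geq N$.

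First I would unpack the genus hypothesis by fixing an open cover $\{V_1,\dots,V_N\}$ of $Y$ such that $(f^*p)|_{V_i}$ admits a continuous section for each $i$. Writing $n_j:=\mathrm{wgt}_p(u_j)$, I would partition $\{1,\dots,N\}$ into $k$ consecutive blocks of sizes $n_1,\dots,n_k$ and let $A_j\subseteq Y$ be the union of the $V_i$ whose index lies in the $j$-th block. Each $A_j$ is then covered by $n_j$ open sets over which the restricted fibration $(f|_{A_j})^*p$ has sections, so $\mathfrak{genus}((f|_{A_j})^*p)\leq n_j=\mathrm{wgt}_p(u_j)$. Applying Definition~\ref{d:sewgt} to the map $f|_{A_j}\colon A_j\to B$ forces $f^*(u_j)|_{A_j}=0$ for every $j$.

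The key step is then to promote these individual vanishings to a vanishing of the full cup product. From the long exact sequence of the pair $(Y,A_j)$, the class $f^*(u_j)\in H^*(Y;\Q)$ lifts to a relative class $\tilde u_j\in H^*(Y,A_j;\Q)$. The relative cup product then places $\tilde u_1\cup\cdots\cup\tilde u_k$ in $H^*(Y,A_1\cup\cdots\cup A_k;\Q)$, which vanishes because the blocks cover $\{1,\dots,N\}$ and hence the $A_j$ cover $Y$. Pushing this zero forward along the natural maps $H^*(Y,A_j;\Q)\to H^*(Y;\Q)$ and invoking naturality of cup products gives $f^*(u_1\cup\cdots\cup u_k)=f^*(u_1)\cup\cdots\cup f^*(u_k)=0$, as required.

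The main obstacle, and really the only non-formal point, is the relative cup product step: one must justify that the product of classes lifted from the groups $H^*(Y,A_j;\Q)$ actually lies in $H^*(Y,A_1\cup\cdots\cup A_k;\Q)$. This is the standard relative cup product pairing, which requires that the $A_j$ form an excisive cover, and is precisely why I would arrange the $V_i$ (and hence the $A_j$) to be open from the outset. The Hurewicz fibration hypothesis on $p$ is used implicitly here, ensuring that $f^*p$ is again a fibration so that $\mathfrak{genus}$ is detected by open covers admitting honest local sections.
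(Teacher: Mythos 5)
The paper does not reprove this result; it simply cites it as \cite[Prop.~32]{FG07}. Your argument is correct and is essentially the standard proof from that reference (going back to the category-weight arguments of Fadell--Husseini and Rudyak): unpack $\mathfrak{genus}(f^*p)\leq\sum_j\mathrm{wgt}_p(u_j)$ into an open cover, group the cover into $k$ blocks $A_j$ of sizes $\mathrm{wgt}_p(u_j)$, use the definition of $\mathrm{wgt}_p$ applied to $f|_{A_j}\colon A_j\to B$ to kill $f^*(u_j)|_{A_j}$, lift each class to $H^*(Y,A_j;\Q)$, and multiply into $H^*(Y,A_1\cup\cdots\cup A_k;\Q)=H^*(Y,Y;\Q)=0$. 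The one point you flag correctly, namely that the relative cup product $H^*(Y,A_1)\otimes\cdots\otimes H^*(Y,A_k)\to H^*(Y,\bigcup_j A_j)$ needs the $A_j$ to form an excisive family, is indeed why one keeps the cover open; for open subsets this is automatic. Two minor remarks that do not affect correctness: the step ``$\mathfrak{genus}(f'^*p)\leq n_j\Rightarrow f'^*(u_j)=0$ for all $f'$'' uses that the defining set $\{k:\ldots\}$ in Definition~\ref{d:sewgt} is an initial segment of $\N$, so its finite supremum is attained, which is worth a line; and if some $\mathrm{wgt}_p(u_j)=\infty$ the stated inequality is to be read as $\mathrm{wgt}_p(u_1\cup\cdots\cup u_k)\geq N$ for every finite $N$, which your block decomposition also gives by letting that block grow.
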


\begin{defn}\label{d:tcwgt}
Given a space $X$, the {\em topological complexity weight} of a non-trivial cohomology class $\overline u\in H^*(X\times X)$ is defined by
\[
\mathrm{wgt}_{\mathrm{TC}}(\overline u)=\mathrm{wgt}_{\pi_X}(\overline u),
\]
where $\pi_X$ is the free path fibration.
\end{defn}

Recall that the (rational) cohomology ring $H^*(X)$ is a graded $\Q$-algebra with the cup product multiplication
\[
H^*(X)\otimes H^*(X)\to H^*(X),
\]
and the kernel of this map is the ideal of the zero-divisors of $H^*(X)$. Using that $\mathrm{TC}(X)= \mathfrak{genus}(\pi_X)$, Farber showed that the topological complexity of $X$ is greater than the zero-divisors-cup-length of $H^*(X)$~\cite[Theorem 7]{Fa03}. An easy example which illustrates this situation is given when $X=S^n$: Suppose $\omega_{S^n}\in H^n(S^n)$ is the cohomological fundamental class of $S^n$ and consider the zero divisor
\[
\overline{\omega_{S^n}}=1\times\omega_{S^n}-\omega_{S^n}\times1\in H^n(S^n\times S^n).
\]
Then
\[
\overline{\omega_{S^n}}^2=((-1)^{n+1}-1)\omega_{S^n}\times\omega_{S^n},
\]
which means that $\mathrm{TC}(S^n)\geq2$ if $n$ is odd and $\mathrm{TC}(S^n)\geq3$ if $n$ is even. In fact, both inequalities are equalities~\cite[Theorem 8]{Fa03}.

Because of Theorem \ref{t:weightsproperties1}(a) and Definition \ref{d:tcwgt}, it is evident that bounding below $\mathrm{wgt}_{\pi_X}$ will give us a lower bound for $\mathrm{TC}(X)$. The following result of Mescher is towards this direction:

\begin{thm}\cite[Prop. 5.3]{Me21}\label{t:weightsproperties3}
Let $X$ be a topological space and let $u\in H^k(X)$ be a non-zero cohomology class with $k\geq2$. If $H^k(f)(u) = 0$ for all continuous maps $f\colon N\times S^1\to X$, where $N$ is any $(k -1)$-manifold, then 
\[
\mathrm{wgt}_{\mathrm{TC}}(\overline u)\geq 2,
\]
where $\overline u =1\times u-u\times1\in H^k(X\times X)$.
\end{thm}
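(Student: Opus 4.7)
The strategy is to combine a Mayer--Vietoris argument with a loop construction on the overlap of a two-set cover, and then invoke Thom's rational representability of homology by closed oriented manifolds.

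Suppose $f\colon Y\to X\times X$ satisfies $\mathfrak{genus}(f^*\pi_X)\leq 2$, so that $Y=U_1\cup U_2$ admits continuous sections $s_i\colon U_i\to X^{[0,1]}$ of the pulled-back free path fibration; concretely, $s_i(y)$ is a path from $p_1(f(y))$ to $p_2(f(y))$. Each $s_i$ is a homotopy over $U_i$ between the two projections of $f$, so pulling back $u$ through either one yields the same class, i.e.\ $f^*(\overline u)|_{U_i}=0$ in $H^k(U_i;\Q)$ for $i=1,2$. By the Mayer--Vietoris long exact sequence of the cover $\{U_1,U_2\}$ with rational coefficients, the class $f^*(\overline u)$ then lies in the image of the connecting homomorphism $\delta\colon H^{k-1}(U_1\cap U_2;\Q)\to H^k(Y;\Q)$.

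Over the intersection I would use the two sections to build a map
\[
L\colon (U_1\cap U_2)\times S^1\to X,
\]
where $L(y,\cdot)$ is the loop at $p_1(f(y))$ obtained by concatenating $s_1(y)$ with the reverse of $s_2(y)$. A cochain-level comparison---modelling $Y$ as the homotopy pushout of $U_1$ and $U_2$ along $U_1\cap U_2$, and interpreting the loop $L(y,\cdot)$ as the difference data for the two sections---should identify the Mayer--Vietoris pre-image of $f^*(\overline u)$ with the slant product $L^*(u)/[S^1]\in H^{k-1}(U_1\cap U_2;\Q)$, giving $f^*(\overline u)=\delta\bigl(L^*(u)/[S^1]\bigr)$.

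It then remains to show that $L^*(u)/[S^1]=0$. For every continuous map $g\colon N\to U_1\cap U_2$ from a closed oriented $(k-1)$-manifold $N$, the composite $L\circ(g\times\id_{S^1})\colon N\times S^1\to X$ pulls $u$ back to zero by hypothesis; hence by naturality of the slant product, $g^*\bigl(L^*(u)/[S^1]\bigr)=0$ in $H^{k-1}(N;\Q)$, or equivalently the Kronecker pairing of $L^*(u)/[S^1]$ with $g_*[N]\in H_{k-1}(U_1\cap U_2;\Q)$ vanishes. Since by Thom's theorem every rational homology class in $H_{k-1}(U_1\cap U_2;\Q)$ is of the form $g_*[N]$ for a closed oriented $(k-1)$-manifold $N$, this forces $L^*(u)/[S^1]=0$, and therefore $f^*(\overline u)=0$, as required. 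The main technical obstacle is the cochain-level identification $f^*(\overline u)=\delta(L^*(u)/[S^1])$, which requires a careful homotopy-pushout model for $Y$ and a compatible description of the Mayer--Vietoris boundary in terms of the loop data coming from the two sections; everything else---the existence of the sections, the homotopy they induce, and Thom's rational representability---is standard input.
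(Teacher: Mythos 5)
The paper does not actually prove this statement; it is imported verbatim from Mescher's paper~\cite{Me21} as a cited input, so there is no in-paper argument to compare your proposal against. Judged on its own terms, your outline is essentially correct and, as far as I can tell, runs parallel to the proof one would give via the second Ganea fibration of $\pi_X$ (whose ``intersection'' piece in the fibrewise join $X^{[0,1]} *_{X\times X} X^{[0,1]}$ is the free loop space $LX$, with your $L$ playing the role of the evaluation $LX\times S^1\to X$). The formal skeleton is sound: genus $\leq 2$ gives $Y=U_1\cup U_2$ with sections $s_i$ providing homotopies between $p_1\circ f$ and $p_2\circ f$ over $U_i$, hence $f^*(\overline u)|_{U_i}=0$; Mayer--Vietoris then places $f^*(\overline u)$ in the image of $\delta$; and the rational Thom representability argument, combined with the hypothesis, forces the chosen Mayer--Vietoris preimage to vanish. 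Two small remarks are worth recording. First, only the ``$b\times\theta$'' Künneth component of $L^*(u)$ is seen here, since $H^k(N;\Q)=0$ for $\dim N=k-1$, so the hypothesis controls exactly $L^*(u)/[S^1]$ and nothing more, which is all that is needed. Second, Thom representability over $\Q$ applies to arbitrary spaces because singular homology has compact supports, so there is no issue with $U_1\cap U_2$ being an arbitrary open subset of an arbitrary $Y$.

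The gap you flag --- the identification $f^*(\overline u)=\pm\,\delta\bigl(L^*(u)/[S^1]\bigr)$ --- is genuine but standard to fill. Choose a singular cocycle $\zeta$ for $u$; the prism operators $P_i$ associated with the homotopies $s_i$ give cochains $\eta_i=P_i^\#\zeta$ on $U_i$ with $d\eta_i = \bigl((p_2\circ f)^\#\zeta - (p_1\circ f)^\#\zeta\bigr)|_{U_i}$, and the Mayer--Vietoris connecting map sends $[\,(\eta_1-\eta_2)|_{U_1\cap U_2}\,]$ to $f^*(\overline u)$. Over $U_1\cap U_2$ the two homotopies agree at $t=0$ and $t=1$, so the two prisms $Z\times[0,1]$ glue (with one reversed) to $Z\times S^1$ mapping to $X$ by $L$, and $(\eta_1-\eta_2)|_{Z}$ is, up to sign, the slant of $L^\#\zeta$ with a fundamental cycle of $S^1$. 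This is exactly the identity you need, and once it is written down, the rest of your argument closes the proof. So I would call this a correct proof with one step left at the level of ``standard but should be carried out,'' rather than a real gap in the idea.
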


\section{TC bounds via cohomology classes}\label{s:main}

In this section, we prove Theorem \ref{t:main} and Corollaries \ref{c:nc&cs} and \ref{c:betti}.

\subsection{Proof of Theorem \ref{t:main}}
Let $u_a\in H^k(M)$, $1<k<n$,  be a non-zero cohomology class such that $H^k(f)(u_a) = 0$ for all continuous maps $f\colon N\times S^1\to M$ and any $(k -1)$-manifold $N$. Then, by Theorem \ref{t:weightsproperties3}, the class $\overline{u_a} =1\times u_a-u_a\times 1\in H^k(M\times M)$ satisfies $\mathrm{wgt}_{\mathrm{TC}}(\overline{u_a})\geq 2$. We have
\begin{equation}\label{eq:u_a^2}
\begin{aligned}
\overline{u_a}^2&=(1\times u_a-u_a\times 1)\cup(1\times u_a-u_a\times 1)\\
&=1\times{u_a}^2-(-1)^ku_a\times u_a-u_a\times u_a+{u_a}^2\times1\in H^{2k}(M\times M).
\end{aligned}
\end{equation}

By Poincar\'e Duality, there exists a non-zero cohomology class $u_b\in H^{n-k}(M)$, where $n-k\geq1$, such that $u_a\cup u_b=\omega_M\in H^n(M)$. Let $\overline{u_b}=1\times u_b-u_b\times 1\in H^{n-k}(M\times M)$.  

Since $k$ is even, equation \eqref{eq:u_a^2} becomes
\begin{equation}\label{eq:u_a^2even}
\overline{u_a}^2=1\times{u_a}^2-2u_a\times u_a+{u_a}^2\times1,
\end{equation}
which is not zero because of the term $u_a\times u_a$. 
Using \eqref{eq:u_a^2even}, we compute
\begin{equation}\label{eq:firstcup}
\begin{aligned}
\overline{u_a}^2\cup\overline{u_b}
&=(1\times{u_a}^2-2u_a\times u_a+{u_a}^2\times1)(1\times{u_b}-{u_b}\times 1)\\
&=-u_b\times{u_a}^2-2u_a\times\omega_M+2\omega_M\times u_a+{u_a}^2\times{u_b}\in H^{n+k}(M\times M).
\end{aligned}
\end{equation}
We observe that $\overline{u_a}^2\cup\overline{u_b}\neq 0$ because $-u_a\times\omega_M+\omega_M\times u_a\neq0$. (Note, also, that the first two terms $-u_b\times{u_a}^2,-2u_a\times\omega_M$ in \eqref{eq:firstcup} cannot cancel each other because in that case we would have $u_a^2=\mu_1\omega_M$ and $u_b=\mu_2u_a$ for $\mu_1\mu_2=-2$, which would contradict that $\omega_M=u_a\cup u_b$; similarly, the last two terms of \eqref{eq:firstcup} cannot cancel each other. In fact, as we shall see below, the case $u_a^2=\mu_1\omega_M$, $\mu_1\neq0$, will imply a much better bound for $\mathrm{TC}$.) Thus, Theorem \ref{t:weightsproperties1}(a), Theorem \ref{t:weightsproperties2} and the fact that $\mathrm{wgt}_{\mathrm{TC}}(\overline{u_a})\geq 2$ 
imply
\begin{equation}\label{TCfirst}
\begin{aligned}
\mathrm{TC}(M)&=\mathfrak{genus}(\pi_X)
\geq\mathrm{wgt}_{\mathrm{TC}}(\overline{u_a}^2\cup\overline{u_b})+1
\geq2\mathrm{wgt}_{\mathrm{TC}}(\overline{u_a})+\mathrm{wgt}_{\mathrm{TC}}(\overline{u_b})+1\geq6.
\end{aligned}
\end{equation}
This bound proves Theorem \ref{t:main}(a) and extends~\cite[Theorem 6.1]{Me21}, where $N=S^1$.

Furthermore, we compute 
\[
\begin{aligned}
\overline{u_a}^2\cup\overline{u_b}^2&=(-u_b\times{u_a}^2-2u_a\times\omega_M+2\omega_M\times u_a+{u_a}^2\times u_b)(1\times u_b-u_b\times 1)\\
&={u_b}^2\times{u_a}^2+(-1)^n2\omega_M\times\omega_M+2\omega_M\times\omega_M+{u_a}^2\times{u_b}^2\in H^{2n}(M\times M).
\end{aligned}
\]
If $n$ is even and $n\neq2k$, then $\overline{u_a}^2\cup\overline{u_b}^2\neq0$, because then $(-1)^n\omega_M\times\omega_M+\omega_M\times\omega_M\neq0$ and ${u_b}^2\times{u_a}^2={u_a}^2\times{u_b}^2=0$ since  one of $k$ or $n-k$ is greater than the middle dimension $n/2$. If $n=2k$ and $u_a^2=0$, then for the same reason $\overline{u_a}^2\cup\overline{u_b}^2\neq0$. Hence, Theorem \ref{t:weightsproperties1}(a), Theorem \ref{t:weightsproperties2} and the fact that $\mathrm{wgt}_{\mathrm{TC}}(\overline{u_a})\geq 2$  imply 
\begin{equation}\label{TCsecond}
\begin{aligned}
\mathrm{TC}(M)&=\mathfrak{genus}(\pi_X)
\geq\mathrm{wgt}_{\mathrm{TC}}(\overline{u_a}^2\cup\overline{u_b}^2)+1
\geq2\mathrm{wgt}_{\mathrm{TC}}(\overline{u_a})+2\mathrm{wgt}_{\mathrm{TC}}(\overline{u_b})+1\geq7.
\end{aligned}
\end{equation}

Finally, if $n=2k$ and  $u_a^2\neq0$, then $u_a^2=\mu\omega_M$ for some $\mu\neq0$. Thus \eqref{eq:u_a^2even} becomes
\begin{equation*}\label{eq:u_a^22k}
\begin{aligned}
\overline{u_a}^2=1\times\mu\omega_M-2u_a\times u_a+\mu\omega_M\times1,
\end{aligned}
\end{equation*}
and we compute
\[
\begin{aligned}
\overline{u_a}^2\cup\overline{u_a}^2&=(1\times\mu\omega_M-2u_a\times u_a+\mu\omega_M\times1)^2\\
&=\mu^2\omega_M\times\omega_M+4\mu^2\omega_M\times\omega_M+\mu^2\omega_M\times\omega_M\\
&=6\mu^2\omega_M\times\omega_M\neq0.
\end{aligned}
\]
Thus, again by Theorem \ref{t:weightsproperties1}(a), Theorem \ref{t:weightsproperties2} and the fact that $\mathrm{wgt}_{\mathrm{TC}}(\overline{u_a})\geq 2$  we obtain 
\begin{equation}\label{TCthird}
\begin{aligned}
\mathrm{TC}(M)&=\mathfrak{genus}(\pi_X)
\geq\mathrm{wgt}_{\mathrm{TC}}(\overline{u_a}^2\cup\overline{u_a}^2)+1
\geq2\mathrm{wgt}_{\mathrm{TC}}(\overline{u_a})+2\mathrm{wgt}_{\mathrm{TC}}(\overline{u_a})+1\geq9.
\end{aligned}
\end{equation}

This proves Theorem \ref{t:main}(b) and (c), and completes the proof of Theorem \ref{t:main}.

\begin{rem}
Note that the above proof does not work when $k$ has odd degree. In that case, $u_a^2=0$ and therefore \eqref{eq:u_a^2} becomes $\overline{u_a}^2=0$.
\end{rem}

\subsection{Proof of Corollary \ref{c:nc&cs}}

First, we recall the definitions and results about some of the basic ingredients for the proof of Corollary \ref{c:nc&cs}.

Given a space $X$ and a rational homology class $v\in H_n(X)$, Gromov~\cite{Gr82} defines 
the {\em $\ell^1$-semi-norm} (also known as the {\em Gromov norm}) of $v$ by
\[
\|v\|_1:=\inf_c\biggl\{\sum_{j} |\lambda_j| \ \biggl\vert  \ \sum_j \lambda_j\sigma_j\in C_n(X) \ \text{is a rational cycle representing } v  \biggl\}.
\]
When $M$ is an $n$-manifold, then $\|M\|:=\|[M]\|_1$ is called the {\em simplicial volume} of $M$. As usual, $[M]$ denotes the fundamental class of $M$. Amongst the most important properties of the $\ell^1$-semi-norm is the functorial property, which follows immediately from the definition:

\begin{lem}\label{l:functorial}
If $f\colon X\to Y$ is a continuous map and $v\in H_n(X)$, then $\|v\|_1\geq\|H_n(f)(v)\|_1$.
\end{lem}

Inoue-Yano~\cite{IY81} and Gromov~\cite{Gr82} proved the following notable result about negatively curved manifolds:

\begin{thm}\cite{IY81,Gr82}\label{t:IYG}
Let $M$ be a negatively curved manifold. If $k>1$, then $\|v\|_1>0$ for each $v\in H_k(X)$.
\end{thm}

Let now $u\in H^k(M)$, $k>1$, where $M$ is a negatively curved manifold. Suppose that there is a $(k-1)$-manifold $N$ and a continuous map $f\colon S^1\times N\to M$ such that $H^k(f)(u)=\beta\cdot\omega_{S^1\times N}$ for some $\beta\neq0$. Then, using the Kronecker product $\langle\cdot,\cdot\rangle$, we obtain
 \begin{equation}\label{eq:nc}
\begin{aligned}
\beta&=\langle\beta\cdot\omega_{S^1\times N}, [S^1\times N]\rangle\\
& =\langle H^k(f)(u), [S^1\times N]\rangle\\
& =\langle u, H_k(f)([S^1\times N])\rangle\\
& =\langle u, \beta\cdot(v+w)\rangle,
\end{aligned}
\end{equation}
i.e., 
\begin{equation}\label{eq:nchomology}
H_k(f)([S^1\times N]) = \beta\cdot(v+w),
\end{equation}
where $v,w\in H_k(M)$ are such that $\langle u, v\rangle=1$ and $\langle u, w\rangle=0$.

Since $S^1\times N$ admits self-maps of degree greater than one (because $S^1$ does), we have that $\|S^1\times N\|=0$ by Lemma \ref{l:functorial}. Hence, applying again Lemma \ref{l:functorial} and \eqref{eq:nchomology} we obtain
\[
0=\|S^1\times N\|\geq\|H_k(f)([S^1\times N])\|_1=\|\beta\cdot(v+w)\|_1.
\]
But this is a contradiction, since $k>1$, and thus $\|\beta\cdot(v+w)\|_1>0$ by Theorem \ref{t:IYG}. Together with Mescher's Theorem \ref{t:weightsproperties3}, we obtain the following:

\begin{thm}\label{t:nc}
Let $M$ be a negatively curved $n$-manifold and $u\in H^k(M)$ be any non-zero cohomology class, where $k\geq2$. Then
\[
\mathrm{wgt}_{\mathrm{TC}}(\overline u)\geq 2,
\]
where $\overline u =1\times u-u\times1\in H^k(M\times M)$.
\end{thm}

Now Corollary \ref{c:nc&cs} for $M$ follows form Theorem \ref{t:main}(b), where the improvement $\mathrm{TC}\geq9$ for $n$ even follows form the fact that both classes $\overline{u_a}$ and $\overline{u_b}$ that appear in computation \eqref{TCsecond} have $\mathrm{wgt}_{\mathrm{TC}}\geq2$ by Theorem \ref{t:nc}.

Finally, the same conclusion follows for any connected sum $M\#L$ using the cohomology classes of $M$. (Recall also that cup products of cohomology classes coming from different summands in a connected sum vanish.)

\begin{rem}\label{r:TC7}
Note that the above argument implies as well that $\mathrm{TC}(M)\geq7$ when $n$ is odd and $n-k\neq1$. (Similarly for $M\#L$.)
\end{rem}

\subsection{Proof of Corollary \ref{c:betti}}
The bound $\mathrm{TC}(M\#L)\geq9$ follows by Corollary \ref{c:nc&cs}. The bound $\mathrm{TC}(M\#L)\leq9$ holds for any $4$-manifold by \eqref{eq:dim-TC}.

\section{Homology classes in connected sums}\label{s:torus}

In this section, we prove Theorem \ref{t:torus}.

\medskip

Recall that, given a topological space $X$, there is an exact sequence
\begin{equation}\label{Hopf}
 \pi_2(X)\xrightarrow{h} H_2(X) \xrightarrow{H_2(c_{X})} H_2(B\pi_1(X))\to0,
 \end{equation}
 called the {\em Hopf sequence}, where $h$ denotes the Hurewicz homomorphism and $c_X$ (resp. $B(\pi_1(X))$) the classifying map (resp. space); see~\cite[Ch.II, Sec. 5]{Br82}. 
 
\begin{defn}
A (rational) homology class $v\in H_2(X)$ which is not in the image of the (rational) Hurewicz homomorphism is called {\em aspherical}. 
\end{defn} 
 
We have the following key observation:

\begin{lem}\label{l:injective}
Let $X$ be a topological space which admits a universal covering. If an aspherical  class $v\in H_2(X)$ is realised by a torus, then it is realised by a $\pi_1$-injective map $T^2\to X$.
\end{lem}
\begin{proof}
Let $v\in H_2(X)$ be an aspherical class and suppose that $f\colon T^2\to X$ is such that $H_2(f)([T^2])=\alpha\cdot v$, for some $\alpha\in\Q\setminus\{0\}$. Let $f_*\colon\pi_1(T^2)\to\pi_1(X)$ be the induced homomorphism. Since $\pi_1(T^2)\cong\Z^2$, we can pass to a finite covering $q\colon T^2\to T^2$ so that either $(f\circ q)_*(\pi_1(T^2))$ is trivial or infinite cyclic or $\Z^2$ (these will be already the only cases, without passing to further coverings, if $\pi_1(X)$ is torsion-free). Let us denote by $g$ the composite map
\[
g:=f\circ q\colon T^2\to X.
\]
We have that
\[
H_2(g)([T^2])=H_2(f\circ q)([T^2])=\deg(q)\alpha\cdot v,
\]
and so $g$ realises $v$. 

Suppose that $g_*\colon\pi_1(T^2)\to\pi_1(X)$  is not injective. Then, either $g_*(\pi_1(T^2))$ is trivial or infinite cyclic. Denote by $p\colon\overline X\to X$ the covering corresponding to $g_*(\pi_1(T^2))$.

If $g_*(\pi_1(T^2))$ is trivial, then $B\pi_1(\overline X)\simeq\mathrm{pt}$, and if $g_*(\pi_1(T^2))$ is infinite cyclic, then $B\pi_1(\overline X)\simeq S^1$. In either case, we have that $H_2(B\pi_1(\overline X))=0$ and thus there is a surjection
\[
 \pi_2(\overline X)\xrightarrow{\overline h} H_2(\overline X)
\]
obtained by the Hopf sequence \eqref{Hopf} for $\overline X$. But this means that in either case $v$ is in the image of $h\colon\pi_2(X)\to H_2(X)$, i.e. it is spherical. This contradiction completes the proof.
\end{proof}

We are now ready to prove Theorem \ref{t:torus}:

\begin{proof}[Proof of Theorem \ref{t:torus}]
Let the homology class
 \[
 v:=(v_1,v_2)\in H_2(M_1\# M_2)=H_2(M_1)\oplus H_2(M_2),
 \]
such that each $v_j\in H_2(M_j)$ is a non-trivial aspherical class, that is, $v_j\notin\mathrm{im}(h_{M_j})$, where $h_{M_j}\colon\pi_2(M_j)\to H_2(M_j)$ is the Hurewicz homomorphism for $M_j$.

First, we observe that $v$ is an aspherical class. Indeed, suppose that $v$ is in the image of the Hurewicz homomorphism
\[
h_{M_1\#M_2}\colon\pi_2(M_1\#M_2)\to H_2(M_1\#M_2),
\]
that is, there is some $g\colon S^2\to M_1\#M_2$, such that
\[
v=h_{M_1\#M_2}([g])=H_2(g)([S^2]), \ [g]\in\pi_2(M_1\# M_2).
\]
For $j=1,2$, let $p_j\circ g\colon S^2\to M_j$, where $p_j\colon M_1\# M_2\to M_j$ denotes the degree one pinch map. Then $[p_j\circ g]\in\pi_2(M_j)$ and
\[
h_{M_j}([p_j\circ g])=H_2(p_j\circ g)([S^2])=H_2(p_j)(v)=v_j,
\]
which means that the $v_j$ is in the image of the Hurewicz homomorphism, contradicting our assumption.

Suppose now that there is a continuous map 
$f\colon T^2\to M_1\# M_2$ 
such that 
\begin{equation}\label{eq:atoroidal}
H_2(f)([T^2])=\alpha\cdot v=\alpha\cdot (v_1,v_2), \ \text{for some} \ \alpha\neq0.
\end{equation}

Let $\pi_1(T^2)=\pi_1(S^1_1\times S^1_2)=\Z\times\Z$, where we can omit the base-point, since $T^2$ is assumed to be connected.  Since $v$ is aspherical, 
we can assume  by Lemma \ref{l:injective} that the induced homomorphism
\begin{equation*}\label{eq:pi_1}
f_*\colon \pi_1(T^2)\to\pi_1(M_1)\ast\pi_1(M_2)
\end{equation*}
is injective. But $f_*(\pi_1(T^2))\subseteq\pi_1(M_1)\ast\pi_1(M_2)$ is itself a free product of conjugates of subgroups of the $\pi_1(M_j)$ and of a free group, by the classic Kurosh subgroup theorem~\cite{Ku34}. Hence, $\pi_1(S^1_1\times S^1_2)$ lies in exactly one of the $\pi_1(M_j)$. 

As above, for $j=1,2$, let $p_j\circ f\colon T^2\to M_j$, where $p_j\colon M_1\# M_2\to M_j$ denotes the degree one pinch map. Set 
\begin{equation}\label{eq:Zcopies}
H_{j,1}=\mathrm{im}((p_j\circ f|_{S^1_1})_*) \ \text{and}  \ H_{j,2}=\mathrm{im}((p_j\circ f|_{S^1_2})_*), \ j=1,2.
\end{equation}
Then the groups $H_{j,1}, H_{j,2}\subseteq\pi_1(M_j)$ 
commute elementwise, and thus there is a well-defined homomorphism given by the multiplication map
\begin{equation}\label{multiplication}
\varphi_j\colon H_{j,1}\times H_{j,2}\to\pi_1(M_j),
\end{equation}
for each $j=1,2$. Let the maps  induced on the classifying spaces
 \[
 h_{j,i}=B(p_j\circ f|_{S^1_i})_*\colon B\pi_1(S^1_i)\to BH_{j,i}, \ i=1,2,
 \]
 \[
 B\varphi_j\colon BH_{j,1}\times BH_{j,2}\to B\pi_1(M_j).
 \]
Note that $B\pi_1(S^1_i)\simeq S^1_i$ and the classifying maps $c_{S^1_i}\colon S^1_i\to B\pi_1(S^1_i)$ are homotopic to the identity for $i=1,2$. Hence, we obtain, for each $j=1,2$, the following homotopy commutative diagram (cf.~\cite[Prop 2.2]{KL09}) 
\begin{equation}\label{commutative2}
\xymatrix{
S^1_1\times S^1_2\ar[d]_{g_j} \ar[r]^{p_j\circ f}&  \ar[d]^{c_{M_j}}  M_j\\
BH_{j,1}\times BH_{j,2} \ar[r]^{\ \ \ B\varphi_j}& B\pi_1(M_j) \\
}
\end{equation}
where
\[
\begin{aligned}
g_j=( h_{j,1}\circ c_{S^1_1})\times(h_{j,2}\circ c_{S^1_2})
\simeq  h_{j,1}\times h_{j,2},
\end{aligned}
\]
and $c_{M_j}$ denotes the classifying map. Since $v_j$ are aspherical, the Hopf sequence \eqref{Hopf} tells us that $H_2(c_{M_j})(v_j)\neq0$ for each $j=1,2$, and so we obtain by \eqref{eq:atoroidal}
\begin{equation}\label{eq:commuteconnected}
\begin{aligned}
H_2(c_{M_j}\circ p_j\circ f)([S^1_1\times S^1_2])&=\alpha\cdot H_2(c_{M_j}\circ p_j)((v_1,v_2))\\
&=\alpha\cdot H_2(c_{M_j})(v_j)\neq 0\in H_2(M_j).
\end{aligned}
\end{equation}
Hence, the commutative diagram \eqref{commutative2} tells us that
\[
0\neq H_2(B\varphi_j\circ g_j)([S^1_1\times S^1_2])=H_2(B\varphi_j)(H_1(h_{j,1})([S^1_1])\times H_1(h_{j,2})([S^1_2])).
\]
In particular, the rational homology groups $H_1(BH_{j,i})$ are not trivial for both $i$, and so both groups $H_{j,1},H_{j,2}$ are infinite. Therefore, $H_{j,i}\cong\Z$, $i=1,2$, and \eqref{multiplication} takes the form
\begin{equation}\label{eq:bothinjective}
\varphi_j\colon\Z\times\Z\to\pi_1(M_j),
\end{equation}
for both $j$. But this implies that $\pi_1(S^1_1\times S^1_2)$ lies in both $\pi_1(M_j)$, which is a contradiction. Hence the map $f$ cannot exist, and the homology class $v$ is cannot be represented by a connected $T^2$. 
\end{proof}

\begin{rem}\label{r:nonatoroidal}
Note that Kronecker duals of $v$ of Theorem \ref{t:torus} need not be atoroidal: Let $M_1=M_2=T^4$ and consider the class
\[
v=[T^2]\times 1 + 1\times[T^2]\in H_2(T^4)\oplus H_2(T^4)=H_2(T^4\# T^4).
\]
The classes $[T^2]\times 1$, $1\times[T^2]$ are clearly aspherical and thus $v$ is not realisable by a connected $T^2$ by Theorem \ref{t:torus}. Consider a Kronecker dual of $v$ given by
\[
u=\omega_{T^2}\times1+1\times\omega_{T^2}\in H^2(T^4)\oplus H^2(T^4)=H^2(T^4\# T^4).
\]
Let $f\colon T^2\to T^4\# T^4$ be the inclusion of a copy of $T^2=T^2\times\mathrm{pt}$ in the first summand $T^4=T^2\times T^2$. Then we compute
\[
\begin{aligned}
\langle H^2(f)(u),[T^2]\rangle&=\langle u,H_2(f)([T^2])\rangle\\
& =\langle\omega_{T^2}\times1+1\times\omega_{T^2},[T^2]\times 1\rangle\\
& =\langle\omega_{T^2}\times1,[T^2]\times 1\rangle\\
& =1.
\end{aligned}
\]
Therefore $H^2(f)(u)\neq0$. In particular, Theorem \ref{t:weightsproperties3} (or Theorem \ref{t:main}) does not apply to $u$.
\end{rem}

\section{Further applications and open problems}\label{s:final}

We will end our discussion with some final applications and open questions motivated by the results of this paper.

\subsection{Positivity of the $\ell^1$-semi-norm}

Given a group $G$, we denote its topological complexity by $\mathrm{TC}(G)=\mathrm{TC}(BG)$. Thus, if $M$ is an aspherical manifold, i.e. $\pi_k(M)=0$ for all $k\geq2$, then one has $\mathrm{TC}(\pi_1(M))=\mathrm{TC}(M)$. Negatively curved manifolds are aspherical (by Cartan-Hadamard theorem), thus their topological complexity can be identified with that of their fundamental group.

Dranishnikov~\cite{Dr20} proved that torsion-free hyperbolic groups $G$ attain maximal topological complexity, equal to $2\mathrm{cd}(G)+1$. Thus, in particular, every negatively curved $n$-manifold $M$ has topological complexity $2n+1$.  Corollaries \ref{c:nc&cs} and \ref{c:betti} give an alternative proof of Dranishnikov's result in certain cases under cohomological conditions (instead of group theoretic), using the rich structure of these spaces via the $\ell^1$-semi-norm. In high dimensions, the proof of Corollaries \ref{c:nc&cs} and \ref{c:betti} can clearly give much better bounds than $\mathrm{TC}\geq9$ as long as there are enough cohomology classes whose cup product gives us the fundamental class of our manifold. Indeed, in such situations we can apply the estimate of Theorem \ref{t:weightsproperties2} together with Theorem \ref{t:nc}.

Therefore we have the following natural questions regarding the role of the $\ell^1$-semi-norm and trying to extend Theorem \ref{t:nc} and Dranishnikov's result:

\begin{que}\label{que1}
Let $M$ be an aspherical $n$-manifold.
\begin{itemize}
\item[(a)] For $k>1$, suppose that $H_k(M)$ contains a homology class with positive $\ell^1$-semi-norm. Is there a cohomology class $u\in H^k(M)$ with $\mathrm{wgt}_{\mathrm{TC}}(\overline u)\geq k$?
\item[(b)] Suppose $\|M\|>0$, i.e. $M$ has positive simplicial volume. Does it hold $\mathrm{TC}(M)=2n+1$? 
\end{itemize}
\end{que}

Note that a positive answer to Question \ref{que1}(a) means in fact that $\mathrm{wgt}_{\mathrm{TC}}(\overline u)= k$ by Theorem \ref{t:weightsproperties1}(b). Also, when $n$ is even, then an affirmative answer to Question \ref{que1}(a) implies an affirmative answer to Question \ref{que1}(b). Indeed, if $\|M\|>0$, then $H^n(f)(\omega_M)=0$ for any $f\colon S^1\times N\to M$ and suppose $\mathrm{wgt}_{\mathrm{TC}}(\overline{\omega_M})= n$. Then, since $\overline{\omega_M}^2=-2\omega_M\times\omega_M\neq0$, Theorem \ref{t:weightsproperties2} would imply $\mathrm{TC}(M)=2n+1$.

\subsection{Connected sums and asphericity}

Pinching to a point the essential sphere of a connected sum $M_1\# M_2$, we obtain the wedge sum $M_1\vee M_2$, whose topological complexity admits the bounds
\begin{equation}\label{wedge}
\max\{\mathrm{TC}(M_1), \mathrm{TC}(M_2), \mathrm{cat}(M_1\times M_2)\}\leq\mathrm{TC}(M_1\vee M_2)\leq\mathrm{TC}(M_1) +\mathrm{TC}(M_2)-1,
\end{equation}
whenever $\mathrm{TC}(M_i)>\dim(M_i)+1$, for both $i$; see~\cite[Theorems 2.5 and 3.6]{Dr14}. A natural problem is to compare the topological complexities of $M_1\# M_2$ and $M_1\vee M_2$, and, more precisely, whether the inequality 
\begin{equation}\label{eq:wedge-cs}
\mathrm{TC}(M_1\vee M_2)\geq\mathrm{TC}(M_1\# M_2)
\end{equation}
holds. Dranishnikov and Sadykov showed \eqref{eq:wedge-cs} 
for $r$-connected $n$-manifolds $M_i$ such that $\mathrm{TC}(M_i)\geq\frac{n+2}{r+1}+1$ for at least one $i$~\cite[Theorem 1.3]{DS19}. Hence, another interesting fact about Corollary \ref{c:betti}  
is that it shows that not only \eqref{eq:wedge-cs} holds, but it is in fact an equality for the manifolds of Corollary \ref{c:betti} in the aspherical setting:

\begin{cor}
 Let $M_1$ be a negatively curved $4$-manifold with non-zero second Betti number. Then $\mathrm{TC}(M_1\vee M_2)=\mathrm{TC}(M_1\# M_2)$ for any aspherical manifold $M_2$.
\end{cor}
\begin{proof}
We have
 \[
\begin{aligned}
\mathrm{TC}(M_1\vee M_2)&=\mathrm{TC}(B\pi_1(M_1\# M_2)) \  \text{(because} \ M_i\ \text{are aspherical)}\\
& =\mathrm{TC}(\pi_1(M_1)\ast\pi_1(M_2))\\
& =\max\{\mathrm{TC}(\pi_1(M_i)),\mathrm{cd}(\pi_1(M_1)\times\pi_1(M_2))+1\} \ \text{(by \cite[Theorem 2]{DS19b})}\\
& =9 \  \text{(because} \ \mathrm{cd}(\pi_1(M_i))=4 \ \text{by asphericity, or} \ \mathrm{TC}(\pi_1(M_1))=9 \ \text{by \cite{Dr20})}\\
&=\mathrm{TC}(M_1\#M_2) \ \text{(by Corollary \ref{c:betti}).}
\end{aligned}
\]
\end{proof}

In general, very little is known about the topological complexity of connected sums, and how it is related to the topological complexity of the summands of the connected sum. On the one hand, connected sums of the projective space $\R P^n$ have topological complexity $\mathrm{TC}(\#_m\R P^n)=2n+1$ if $m\geq2$ (see~\cite[Theorem 1.3]{CV19}), while $\mathrm{TC}(\R P^n)$ varies, depending on the dimension; more precisely, in all but dimensions $1,3$ and $7$, the topological complexity of $\R P^n$ is given by the smallest integer $k$ such that $\R P^n$ admits an immersion into $\R^{k-1}$ (\cite{FTY03}), which is smaller than $2n+1$. Note that $\R P^n$ is orientable if and only if $n$ is odd. Moreover, $\R P^n$ is $2$-fold covered by $S^n$, thus, in particular, it is a rational homology sphere. Of course, any connected sum $\#_m\R P^n$ is also a rational homology sphere. Clearly, this example is different from the concepts of this paper, since here we need non-trivial intermediate rational homology groups. On the other hand, the topological complexity of non-trivial connected sums of $n$-manifolds does not necessarily attain its maximum value $2n+1$ according to the following examples:

\begin{ex}\label{ex:4dimnonaspherical}
Let $\Sigma_g$ be an aspherical surface of genus $g$. Then by~\cite[Theorem, p. 2511]{Ru97}
\[
\mathrm{cat}(\Sigma_g\times S^2)=\mathrm{cat}(\Sigma_g)+1=4.
\]
Thus, by \eqref{eq:LS-TC} and \cite[Theorem 1.1]{DS19}, we obtain
\begin{equation}\label{surface-sphereupper}
\mathrm{TC}((\Sigma_g\times S^2)\#(\Sigma_g\times S^2))\leq2\mathrm{cat}(\Sigma_g\times S^2)-1=7<9.
\end{equation}
\end{ex}

\begin{ex}\label{ex:productsspheres}
For any spheres $S^m, S^n$ of dimensions $m$ and $n$ respectively we have
\[
\mathrm{TC}((S^m\times S^n)\#(S^m\times S^n))\leq2\mathrm{cat}(S^m\times S^n)-1=5<6.
\]
\end{ex}

The summands $S^m\times S^n$ in Example \ref{ex:productsspheres} are not rational homology spheres, but still they do not contain any class $u_a$ as in Theorem \ref{t:main} (in contrast to Example \ref{ex:4dimnonaspherical}). Furthermore, in the case $m=1,n>1$, the manifold $(S^1\times S^n)\#(S^1\times S^n)$ has non-trivial fundamental group, isomorphic to a free group of two generators $F_2$ (in particular, hyperbolic with trivial center), which has topological complexity equal to $3$ by~\cite{Dr20}. The  last two examples concern manifolds that are not aspherical, but their fundamental groups have trivial center. It is therefore natural to ask about the relationship between triviality of the center of the fundamental group and maximal topological complexity for aspherical manifolds and their connected sums:

\begin{que}\label{que2}
Let $M$ be an aspherical $n$-manifold. 
\begin{itemize}
\item[(a)] If the center $Z(\pi_1(M))$ is trivial, does it hold that $\mathrm{TC}(M)=2n+1$?
\item[(b)] Does it hold $\mathrm{TC}(M\#L)=2n+1$ for any aspherical $n$-manifold $L$?
\end{itemize}
\end{que}

The condition ``trivial center" is automatically satisfied in part (b) of Question \ref{que2}. Our motivation for part (a) of  Question \ref{que2} partially stems from the following examples:

\begin{ex}\label{ex:n-torus}
The $n$-torus $T^n$ is aspherical with fundamental group $\pi_1(T^n)=\Z^n=Z(\pi_1(T^n))$. It has topological complexity $\mathrm{TC}(T^n)=n+1$; see~\cite[Theorem 13]{Fa03}.
\end{ex}

\begin{ex}\label{ex:product-surface}
Let $F$ be a surface and $\Sigma_g$ be a hyperbolic surface. If $F=S^2$ or $T^2$, then the product $F\times\Sigma_g$ satisfies
\[
\mathrm{TC}(F\times\Sigma_g)\geq7,
\]
by Theorem \ref{t:main}(b). Moreover, since $\mathrm{TC}(\Sigma_g)=5$, by~\cite[Theorem 9]{Fa03} and $\mathrm{TC}(T^2)=\mathrm{TC}(S^2)=3$, by~\cite[Theorem 13]{Fa03}, we obtain 
\[
\mathrm{TC}(F\times\Sigma_g)\leq\mathrm{TC}(F)+\mathrm{TC}(\Sigma_g)-1\leq3+5-1=7,
\]
by~\cite[Theorem 11]{Fa03}. Thus,
\[
\mathrm{TC}(F\times\Sigma_g)=7,  \ \text{if} \ F=S^2 \ \text{or} \ T^2.
\]
Finally, if $F$ is hyperbolic, then the same proof as in Corollary \ref{c:nc&cs} for $u_a=\omega_F\times1$ and $u_b=1\times\omega_{\Sigma_g}$ tells us that
\[
\mathrm{TC}(F\times\Sigma_g)=9.
\]
\end{ex}
The latter example extends in particular Corollary \ref{c:betti} and Dranishnikov's result about (manifolds with) hyperbolic groups~\cite{Dr20} to other aspherical manifolds that do not have hyperbolic fundamental groups.

Indeed, Question \ref{que2}(a) asks for a generalisation of the case of negatively curved manifolds whose fundamental groups are hyperbolic and thus have trivial center. In fact, Question \ref{que2}(a) might encompass Question \ref{que1}(b), since an open (folklore) conjecture on aspherical manifolds asserts that $\|M\|=0$ whenever $Z(\pi_1(M))\neq0$. Even more, together with Examples \ref{ex:n-torus} and \ref{ex:product-surface}, one can consider the following much more general and precise question about the role of the center:

\begin{que}\label{que3}
Let $M$ be an aspherical $n$-manifold. Does it hold $\mathrm{TC}(M)=2n+1-\rank(Z(\pi_1(M)))$?
\end{que}

An affirmative answer to Question \ref{que3} holds for several $3$-manifold cases, including 
\begin{itemize}
\item hyperbolic $3$-manifolds $M$, where $\mathrm{TC}(M)=7$ and $\rank(Z(\pi_1(M)))=0$ (by Dranishnikov's result~\cite{Dr20});
\item the $3$-torus $T^3$, where $\mathrm{TC}(T^3)=4$ and $\rank(Z(\pi_1(M)))=3$ (this is part of Example \ref{ex:n-torus});
\item products $M=S^1\times\Sigma_g$, where $\mathrm{TC}(M)=6$ and $\rank(Z(\pi_1(M)))=1$ and
\item non-trivial $S^1$-bundles $M$ over hyperbolic surfaces $\Sigma_g$, where again $\mathrm{TC}(M)=6$ and $\rank(Z(\pi_1(M)))=1$.
\end{itemize}
In the latter two cases, the fundamental group of the $3$-manifold has infinite cyclic center coming from the $S^1$-fiber. The lower bound $6$ for their topological complexity is obtained by the existence of the hyperbolic surface in such $3$-manifolds.  
The upper bound $6$ is given by the product inequality for $S^1\times\Sigma_g$ (see \cite[Theorem 11]{Fa03}) and for the non-trivial $S^1$-bundles over a hyperbolic surface it follows as an application of a result of Grant~\cite[Prop. 3.7]{Gr12}. In fact, Grant gives strong evidence for an affirmative answer to Question \ref{que3} for nilpotent manifolds:

\begin{thm}\cite[Corollary 3.8]{Gr12}\label{t:Grant}
Let $G$ be a finitely generated torsion-free nilpotent group. Then $\mathrm{TC}(G)\leq 2\rank(G)+1 - \rank(Z(G))$.
\end{thm}

In dimension three, Theorem \ref{t:Grant} applies to $T^3$ and to non-trivial $S^1$-bundles $M$ over $T^2$, where $Z(\pi_1(M))\cong\pi_1(S^1)=\Z$ and, thus, $\mathrm{TC}(M)\leq6$ (for nilpotent examples in dimension four see~\cite[Prop. 6.10]{Ne18}; for those $4$-manifolds we have $\mathrm{TC}\leq8$). An interesting remaining case in dimension three (possibly looking towards negative answers to Question \ref{que3}) is that of solvable manifolds. Namely, let $M$ be a mapping torus of an Anosov diffeomorphism of $T^2$. Such $M$ is aspherical, $Z(\pi_1(M))=0$ and there is no atoroidal cohomology class in $M$. In particular, Theorem \ref{t:main} cannot be used to show that $\mathrm{TC}(M)\geq6$. We know, however, that  $\mathrm{TC}(M)\geq5$ by~\cite[Theorem 6.6]{Me21} since $M$ does not admit maps of non-zero degree by direct products~\cite[Theorem 1]{KN13} (note that $\mathrm{TC}\geq5$ holds for nilpotent $3$-manifolds as well for the same reason).

Finally, concerning Question \ref{que2}(b),  in the non-aspherical setting we have the bound $\mathrm{TC}(\#_m(S^2\times S^1))\leq5$, as we already explained in Example \ref{ex:productsspheres}. Due to the small range of values of the topological complexity in dimension three, the following special case of Question \ref{que2} is intriguing: 

\begin{que}\label{que4}
Let $M$ be an aspherical $3$-manifold which is not a rational homology sphere. What is the value of $\mathrm{TC}(M\#M)$?
\end{que}

By~\cite[Theorem 1]{KN13}, such $M\#M$ does not admit maps of non-zero degree from direct products, hence by~\cite[Theorem 6.6]{Me21} we obtain that $5\leq\mathrm{TC}(M\#M)\leq7$. As we mentioned above, quite often (e.g., when $M$ is a hyperbolic $3$-manifold or a circle bundle over a hyperbolic surface), we obtain $\mathrm{TC}(M\# M)\geq6$, because of the existence of an atoroidal class in $H^2(M)$. Hence, Question \ref{que4} becomes even more interesting when there is no such atoroidal class, i.e., when $M$ is the $3$-tours $T^3$, a non-trivial circle bundle over $T^2$ or a mapping torus of an Anosov diffeomorphism of $T^2$.

\bibliographystyle{amsalpha}

\end{document}